\newtheorem*{thm*}{Theorem}
\newtheorem*{cor*}{Corollary}
\newtheorem{lem}{Lemma}
\begin{document}

\nocite{*}
\title{Turbulent transport and coherent structures in 3D plasmas}

\author{Keith Leitmeyer}

\maketitle

\begin{abstract}
Kinetic and magnetic enstrophy are shown to concentrate to smaller scales from the integral scale to a Kraichnan-type scale for the 3D magnetohydrodynamic equations.  This is an improvement of the result from Bradshaw and Gruji\'c (2013), using redesigned ensemble averages.
\end{abstract}
\section{Introduction}
Observational and numerical evidence suggests that in magnetohydrodynamic (MHD)
turbulence the vorticity $\omega = \nabla \times u$ and current $j = \nabla \times b$
(here, $u$ and $b$ are the velocity and the magnetic field, respectively)
concentrate on coherent quasi low-dimensional structures--predominantly quasi 
two-dimensional sheets--which become 
increasingly thin exhibiting
morphological dynamics consistent with a process known as \emph{turbulent cascade}.
Understanding this process is important due to its intimate relationship  with violent reconnection
events in the realm of \emph{solar wind turbulence},
and is highly relevant to the goals outlined
in the Space Studies Board of the National Research Council (NRC) survey,
22\emph{A Decadal Strategy for Solar and Space Physics (Heliophysics)}, completed in 2012, and in 
particular, to the strategic goal to ``discover and characterize fundamental processes that occur both 
within the heliosphere and throughout the universe''\cite{NRC}.

\medskip

As a matter of fact--in the last five years--there has been a flurry of activity in the heliophysics
community directed at understanding the phenomenon of \emph{turbulent dissipation}, especially
within the range of kinetic scales. One of the most promising theories, supported by
extensive computational simulations, is that the coherent structures (most notably \emph{current
sheets}) exhibit a process of turbulent cascade down to the kinetic scales, essentially, all the
way to electron scales, where they trigger extremely strong and \emph{localized}
heating of the plasma, dissipating the energy (the so-called \emph{heating via current sheets})\cite{Karimabadi,Farge,Zhdankin}. 
There is also a sense of optimism in part of the community that the relevance of this theory 
to the solar wind could be confirmed via the data to be collected by the upcoming
NASA Magnetospheric Multiscale (MMS) mission.

\medskip

The zero-step in this theory is an assumption on the geometry of the turbulent plasma
at the \emph{interface} between the continuum (described by the MHD system) and the 
kinetic (described by the Vlasov-Maxwell-Poisson system)-scale dynamics; shortly, the
predominance of the current sheet geometry originating at continuum scales is \emph{assumed},
and then utilized as the input when descending into the kinetic-scale dynamics. This brings
out the question of \emph{existence} of \emph{turbulent cascades}--and in particular,
the cascades of kinetic and magnetic enstrophies--in the \emph{continuum}/MHD regime to the
forefront of scientific interest.

Kinetic and magnetic enstrophy was shown to concentrate towards smaller scales in \cite{BG} using ensemble averages in physical scales.  Here the ensemble averages used are redesigned to allow weaker assumptions.  The ensemble averages used to state the enstrophy concentration theorem are described first.  The following section goes over the conditions under which enstrophy concentration is shown.  Then we recall the bounds obtained in \cite{BG}, and finally the enstrophy concentration theorem is proven.

Related work on cascades and locality in hydrodynamic turbulence can be found in \mbox{[2-5,\,7-13,\,15,\,16,\,18-21]}.

\section{Fluxes and ensemble averages}
The MHD equations, which model evolution of the velocity and magnetic fields in
an electrically conducting incompressible fluid, read
\begin{equation}
\begin{aligned}\label{MHD}
&\partial_t u - \Delta u + (u\cdot\nabla)u - (b\cdot\nabla)b+\nabla P =0,
\\
&\partial_t b - \Delta b + (u\cdot\nabla)b - (b\cdot\nabla)u=0,
\\
&\nabla\cdot u = \nabla \cdot b =0.
\end{aligned}
\end{equation} 
(Here, $P$ is the total pressure, and the magnetic resistivity and the kinematic viscosity
are normalized to 1.)

\medskip

Taking the curl of equation \ref{MHD} gives equations for vorticity and current,
\begin{equation}
\begin{aligned}\label{vorticity}
&\partial_t \omega - \Delta \omega = -(u\cdot\nabla)\omega + (\omega\cdot\nabla)u + (b\cdot\nabla)j - (j\cdot\nabla)b,
\\
&\partial_t j - \Delta j = -(u\cdot\nabla)j + (j\cdot\nabla)u +(b\cdot\nabla)\omega - (\omega\cdot\nabla)b + 2\sum _{l=1}^3 \nabla b_l \times \nabla u_l.
\end{aligned}
\end{equation} 

These equations can be used to study \emph{inward} kinetic and magnetic enstrophy fluxes.  Rather 
than traditional fluxes through the boundary of a ball $B=B(x_0,2R)$,

\begin{equation}
\begin{aligned}
&-\int_{\partial B} \frac{1}{2}|\omega|^2(u\cdot n)\,d\sigma = -\int_B (u\cdot\nabla)\omega\cdot\omega\,dx,
\\
&-\int_{\partial B} \frac{1}{2}|j|^2(u\cdot n)\,d\sigma = -\int_B (u\cdot\nabla)j\cdot j\,dx,
\end{aligned}
\end{equation} 

we use a smooth cutoff function $\psi$ supported on $B(x_0,2R)$ and equal to 1
on $B(x_0,R)$--with inward pointing gradient--and
study fluxes through the spherical layer of thickness $R$,

\begin{equation}
\begin{aligned}\label{flux}
&\int \frac{1}{2}|\omega|^2(u\cdot \nabla\psi)\,dx = -\int (u\cdot\nabla)\omega\cdot\psi\omega\,dx,
\\
&\int \frac{1}{2}|j|^2(u\cdot \nabla\psi)\,dx = -\int (u\cdot\nabla)j\cdot \psi j\,dx.
\end{aligned}
\end{equation} 

The reason is that this form of the flux is more amenable to mathematical analysis, while
at the same time preserving the physics.  Time-averaged quantities are studied in turbulence, so we take time averages weighted according to a smooth function $\eta(t)$ which is 0 on $[0,T/3]$ and 1 on $[2T/3,T]$, denoting $\phi(x,t)=\psi(x)\eta(t)$:

\begin{equation}
\begin{aligned}\label{timeavgflux}
&\int_0^T\int \frac{1}{2}|\omega|^2(u\cdot \nabla\phi)\,dx\,dt = -\int_0^T\int (u\cdot\nabla)\omega\cdot\phi\omega\,dx\,dt,
\\
&\int_0^T\int \frac{1}{2}|j|^2(u\cdot \nabla\phi)\,dx\,dt = -\int_0^T\int (u\cdot\nabla)j\cdot \phi j\,dx\,dt.
\end{aligned}
\end{equation}

Multiplying equation \ref{vorticity} by $\phi\omega$ and $\phi j$ respectively, and integrating over space and time, will yield expressions which can be used to \emph{dynamically} estimate the 
fluxes:

\begin{equation}
\begin{aligned}\label{wflux}
&\int_0^T\int\frac{1}{2}|\omega|^2(u\cdot\nabla\phi)\,dx\,dt = \int\frac{1}{2}|\omega(x,T)|^2\psi(x)\,dx + \int_0^T\int|\nabla\omega|^2\phi\,dx\,dt \\
& -\int_0^T\int\frac{1}{2}|\omega|^2(\partial_s\phi+\Delta \phi)\,dx\,dt -\int_0^T\int (\omega\cdot\nabla)u\cdot(\phi\omega)\,dx\,dt \\
&- \int_0^T\int (b\cdot\nabla)j\cdot(\phi\omega)\,dx\,dt + \int_0^T\int (j\cdot\nabla)b\cdot(\phi\omega)\,dx\,dt \\
& =\int\frac{1}{2}|\omega(x,T)|^2\psi(x)\,dx + \int_0^T\int|\nabla\omega|^2\phi\,dx\,dt + H^\omega+N^\omega_1+L^\omega+N^\omega_2
\end{aligned}
\end{equation}

\begin{equation}
\begin{aligned}\label{jflux}
&\int_0^T\int\frac{1}{2}|j|^2(u\cdot\nabla\phi)\,dx\,dt = \int\frac{1}{2}|j(x,T)|^2\psi(x)\,dx + \int_0^T\int|\nabla j|^2\phi\,dx\,dt \\
& -\int_0^T\int\frac{1}{2}|j|^2(\partial_s\phi+\Delta \phi)\,dx\,dt +\int_0^T\int (\omega\cdot\nabla)b\cdot(\phi j)\,dx\,dt \\
&- \int_0^T\int (b\cdot\nabla)\omega\cdot(\phi j)\,dx\,dt - \int_0^T\int (j\cdot\nabla)u\cdot(\phi j)\,dx\,dt \\
& -\int_0^T\int \Big( 2\sum_{l=1}^3 \nabla u_l \times \nabla b_l \Big) \cdot(\phi j) \,dx\,dt \\
& =\int\frac{1}{2}|j(x,T)|^2\psi(x)\,dx + \int_0^T\int|\nabla j|^2\phi\,dx\,dt + H^j+N^j_1+L^j+N^j_2+X
\end{aligned}
\end{equation}

Enstrophy concentration will be demonstrated locally, over a ball $B(0,2R_0)$.  Ultimately we want to show that the average of these fluxes over suitable collections of functions $\psi$ of a particular scale is positive, for a range of scales.

Fix $C_0>1$ and $3/4<\rho<1$.  A \textbf{refined test function} at scale $R$ is any smooth function $\psi$ supported in a ball of radius $2R$ satisfying $0\leq\psi\leq 1$, $|\nabla \psi|<\frac{C_0}{R} \psi^\rho$, and $|\Delta\psi|<\frac{C_0}{R^2} \psi^{2\rho-1}$.

Now fix a scale $R_0$ refined test function $\psi_0$ centered at 0. An \textbf{ensemble} at scale $R$ with global multiplicity $K_1$ and local multiplicity $K_2$ is a collection of scale $R$ test functions $\{\psi_i\}_{i=1}^n$ satisfying the following properties:
\begin{enumerate}
\item $\psi_i \leq \psi_0 \leq \sum \psi_i$
\item $(R_0/R)^3\leq n\leq K_1 (R_0/R)^3$
\item No point of $B(2R_0,0)$ is contained in more than $K_2$ of the supports of $\psi_i$.
\end{enumerate}

For a function $f$, denote by $\langle F \rangle _R$ the \textbf{ensemble average} $\frac{1}{n}\sum_{i=1}^n\frac{1}{R^3} \int f\psi_{i,R}\,dx$, and $F_0=\frac{1}{R_0^3}\int f \psi_0 \,dx$. 

  Property 1 above is needed to compare $\langle F\rangle_R$ to $F_0$. Due to Property 1, test functions near the boundary of the support of $\psi_0$ will have small integrals, effectively skewing the ensemble average towards zero.  Larger $K_1$ and $K_2$ allow ensembles that have higher weight on functions away from the boundary, making the skewing insignificant.

These ensemble averages can be viewed as a way to detect whether a function is significantly negative above some spatial scale.  If every ensemble average $\langle F \rangle _R$ is positive, no matter how one arranges and stacks the test functions, then the function is not significantly negative at scales larger than $R$.  Increasing $K_1$ and $K_2$ lowers the threshold for a function to be considered significantly negative.

One way to explicitly construct ensembles is to apply Lemma 2 to $\psi_0$ and varying the multiplicity of the resulting functions (Assume $\psi_0$ satisfies the stronger $C_0'$-bounds to get an ensemble with $C_0$-bounds).

The following lemma states that ensemble averages (at any scale) of positive functions are comparable to the large scale mean.  The proof immediately follows from the definitions.

\begin{lem}\label{lem1} If $f\geq 0$ then $\frac{1}{K_1}F_0 \leq \langle F\rangle_R \leq K_2 F_0$.  For slightly modified ensemble averages, we have $\frac{1}{n}\sum_1^n \frac{1}{R^3} \int f\psi_{i,R}^\delta\, dx \leq K_2\frac{1}{R_0^3} \int f\psi_0^\delta\, dx \;(\delta>0)$.\qed 
\end{lem}

Using a refined partition of unity, one can turn larger scale ensembles into smaller scale ensembles.

\begin{lem}\label{lem2}
Any scale $R$ test function satisfying $C_0$ bounds is a sum of $8\lceil R/R'\rceil^3$ scale $R'$ test functions satisfying $C'_0$ bounds (where $R>R'$ and $C_0'$ depends only on $C_0$).

Therefore for all $(K_1,K_2,C_0)$-ensembles at scale $R$ and every $R'<R$, there exists a $(64K_1,8K_2,C_0')$-ensemble at scale $R'$ such that $\langle F\rangle_R=\langle F \rangle_{R'}$.
\end{lem}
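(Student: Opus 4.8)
The plan is to refine $\psi$ through a smooth partition of unity at scale $R'$ and then multiply, reducing everything to the product rule. Concretely, I fix once and for all a smooth profile $\zeta\geq 0$ supported in the unit ball, normalize a lattice of its rescaled translates into a partition of unity $\{\chi_k\}$ with spacing of order $R'$, set $\psi_k=\psi\chi_k$, and check that each $\psi_k$ is a refined test function at scale $R'$. Part 1 is then the identity $\psi=\sum_k\psi_k$; Part 2 follows by applying this to every member $\psi_i$ of the given ensemble and collecting the pieces $\{\psi_{i,k}\}$.

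The first substantive point is the construction of $\{\chi_k\}$ with the \emph{refined} (nonlinear) bounds $|\nabla\chi_k|\leq\frac{c}{R'}\chi_k^{\rho}$ and $|\Delta\chi_k|\leq\frac{c}{R'^2}\chi_k^{2\rho-1}$, not merely $|\nabla\chi_k|\lesssim 1/R'$. The key observation is that a standard compactly supported bump vanishes to infinite order at the boundary of its support, and infinite-order vanishing dominates any fixed power: near the boundary $\chi_k\sim e^{-1/t}$ while $\chi_k^{\rho}\sim e^{-\rho/t}$, so $|\nabla\chi_k|/\chi_k^{\rho}\lesssim t^{-2}e^{-(1-\rho)/t}\to 0$, and likewise for the Laplacian with exponent $2(1-\rho)>0$; in the interior, where $\chi_k$ is bounded below, the bounds are trivial by compactness. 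Normalizing by $S=\sum_k\tilde\chi_k$ (smooth, bounded above and below, with bounded derivatives) preserves these bounds up to a universal constant. Building $\{\chi_k\}$ as a tensor product of one-dimensional partitions gives pointwise overlap at most $2^3=8$ and, with spacing of order $2R'$, at most $8\lceil R/R'\rceil^3$ pieces meeting the ball of radius $2R$ supporting $\psi$.

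The main work, and the step I expect to be the genuine obstacle, is pushing the product $\psi_k=\psi\chi_k$ through the Leibniz rule while respecting the nonlinear exponents; this is where $3/4<\rho<1$ enters, guaranteeing $2\rho-1\in(0,1)$ and $1-\rho\in(0,1)$. Using $0\leq\psi,\chi_k\leq 1$ one has $\chi_k\psi^{\rho}\leq(\psi\chi_k)^{\rho}=\psi_k^{\rho}$ and $\psi\chi_k^{\rho}\leq\psi_k^{\rho}$, so $|\nabla\psi_k|\leq\chi_k|\nabla\psi|+\psi|\nabla\chi_k|\leq(\tfrac{C_0}{R}+\tfrac{c}{R'})\psi_k^{\rho}\leq\tfrac{C_0+c}{R'}\psi_k^{\rho}$. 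For the Laplacian the three Leibniz terms are handled the same way: the pure terms use $\chi_k\leq\chi_k^{2\rho-1}$ and $\psi\leq\psi^{2\rho-1}$, while the cross term $2\nabla\psi\cdot\nabla\chi_k$ is bounded by $\frac{2C_0c}{RR'}\psi_k^{\rho}\leq\frac{2C_0c}{R'^2}\psi_k^{2\rho-1}$, using $R>R'$ together with $\psi_k^{\rho}\leq\psi_k^{2\rho-1}$ (since $\rho\geq 2\rho-1$ on $[0,1]$). Collecting terms yields $|\Delta\psi_k|\leq\frac{C_0'}{R'^2}\psi_k^{2\rho-1}$ with $C_0'=C_0+2C_0c+c$, so $C_0'$ depends only on $C_0$ (and the fixed $\rho$), as required.

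For Part 2, apply Part 1 to each $\psi_i$ and take the refined ensemble to be the nonzero products $\{\psi_{i,k}\}$. Property 1 is immediate: $\psi_{i,k}\leq\psi_i\leq\psi_0$ and $\sum_{i,k}\psi_{i,k}=\sum_i\psi_i\geq\psi_0$. Property 3 holds because a point lies in at most $K_2$ of the $\mathrm{supp}\,\psi_i$ and, for each such $i$, in at most $8$ of the $\mathrm{supp}\,\chi_k$, giving local multiplicity at most $8K_2$. For property 2, each $\psi_i$ splits into at most $8\lceil R/R'\rceil^3\leq 64(R/R')^3$ pieces, so $n'\leq 64K_1(R_0/R')^3$, while the lower bound $n'\geq(R_0/R')^3$ is inherited from $n\geq(R_0/R)^3$. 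Finally, since $\sum_k\chi_k\equiv1$ on $\mathrm{supp}\,\psi_i$ we have $\sum_{i,k}\int f\psi_{i,k}\,dx=\sum_i\int f\psi_i\,dx$; as the two normalizations satisfy $n'R'^3=nR^3$ once the cardinality is fixed at $n(R/R')^3$, dividing gives $\langle F\rangle_{R'}=\langle F\rangle_R$. The one delicate bookkeeping point is selecting this cardinality inside the permitted window $[(R_0/R')^3,\,64K_1(R_0/R')^3]$, which is precisely the slack that the loosened constants $64K_1$ and $8K_2$ are there to provide.
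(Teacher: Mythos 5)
Your proof is correct and follows essentially the same route as the paper's: a lattice partition of unity $h_p = g_p/\sum_q g_q$ at scale $R'$ built from translates of a bump, the products $\psi h_p$ handled by the Leibniz rule via the elementary inequalities $\chi \leq \chi^{\rho} \leq \chi^{2\rho-1}$ for $0 \leq \chi \leq 1$ (with $R' < R$ absorbing the cross term), and the refined ensemble $\{\psi_i h_p\}_{i,p}$ yielding the multiplicities $64K_1$ and $8K_2$. The only departures are elaborations rather than a different approach: you verify that a standard bump actually satisfies the refined nonlinear bounds (the paper simply posits such a $g_0$), and you explicitly flag the cardinality normalization $n'R'^3 = nR^3$ required for the exact identity $\langle F\rangle_{R'} = \langle F\rangle_R$, a bookkeeping point the paper's proof leaves implicit.
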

\begin{proof}
Let $\psi$ be a scale $R$ test function satisfying $C_0$ bounds .  Now to construct the partition of unity, take a scale $R'$ test function $g_0$ (satisfying $C_0$ bounds), centered at zero and equal to 1 on $[-R',R']^3$.  Define $g_p=g_0(x-2R'p)$, where $p\in \mathbb{Z}^3$.  Then $1\leq\sum_p g_p\leq 2$ so we may define $h_p= g_p / \sum_q g_q$.

Some calculus shows that $|\nabla h_p | < \frac{6C_0}{R'} h_p^\rho$ and $|\Delta h_p|<\frac{3C_0+10C_0^2}{R'^2} h_p^{2\rho-1}$, so $|\nabla (\psi h_p)|< \frac {7C_0}{R'} (\psi h_p)^\rho$ and $|\Delta (\psi h_p)|<\frac{4C_0+22C_0^2}{R'^2}(\psi h_p)^{2\rho-1}$. Fewer than $8\lceil R/R'\rceil^3\leq 64(R/R')^3$ of the functions $\psi h_p$ are nonzero, and for any $x$, $\psi_p(x)\neq 0$ for at most 8 functions.

Since $\psi = \sum_p \psi h_p$, the first claim is proven.  For the second claim, given an ensemble $\{\psi_i\}_i$, the new ensemble will be $\{\psi_ih_p\}_{i,p}$.
\end{proof}
\section{Assumptions}

Now we come to the conditions on the current and vorticity over $B(0,2R_0)\times(0,T)$ under which we can show that there is enstrophy concentration.

\subsection{Geometry/smoothness}\label{geosmooth}

Denote by $\theta(u,v)$ the angle between two vectors $u, v$.  It is required that for some $M, C_1>0$, 

\begin{align}
| \sin \theta(\omega(x+y,t), \omega(x,t))|\leq C_1|y|^{1/2} \label{geo}
\end{align}

for every $t\in(0,T)$, every $x\in B(0,2R_0+R_0^{2/3})$ with $|\nabla u(x,t)|>M$, and every $y$ such that $|y|< 2 (\sigma_0/\beta) +(\sigma_0/\beta)^{2/3},$ and
\begin{align}
|j(x+y,t)-j(x,t)|\leq |j(x+y,t)||y|^{1/2} \label{smooth}
\end{align}
for every $t\in(0,T)$, every $x\in B(0,2R_0+R_0^{2/3})$ with $|\nabla b(x,t)|>M$, and every $y$ such that $|y|< 2 (\sigma_0/\beta) +(\sigma_0/\beta)^{2/3}.$  The quantity $\sigma_0/\beta$ is defined in section \ref{kraichnan}.

Condition \ref{geo} depends only on the angle of the vorticity vector $\omega$.  This $\frac{1}{2}$--Holder coherence will deplete the vortex stretching term.  This condition is needed because condition \ref{smooth} is insufficient to control the vortex stretching term $N^\omega_1$, which has no explicit dependence on the magnetic field.

Condition \ref{smooth} is needed for the nonlinear terms that do not have a geometric kernel available.  It requires $\frac{1}{2}$--Holder continuity, and more.  It would be too restrictive if $|j(x+y,t)|$ were ever small, but $x+y$ is always close to the region where $|\nabla b|$ is large, and roughly, $\nabla b$ and $j$ are large in the same regions.

In \cite{BG}, condition \ref{smooth} was used with $j$ replaced by $\omega$ with no assumption corresponding to \ref{geo}.  The present formulation is preferred because current appears to be more regular than vorticity in numerical simulations.

\subsection{Kraichnan-type scale}\label{kraichnan}
Let $e_0$, $E_0$, and $P_0$ denote the time-averaged total energy, total enstrophy, and total palinstrophy at the integral scale.  Precisely,

\begin{align*}
&e_0=\frac{1}{T}\int_0^T\frac{1}{R_0^3}\int \phi_0^{4\rho-3}\big(\frac{|u|^2}{2}+\frac{|b|^2}{2}\big)\,dx\,dt, \\
&E_0 = \frac{1}{T}\int_0^T\frac{1}{R_0^3}\int \phi_0^{2\rho-1}(|\omega|^2+|j|^2)\,dx\,dt, \\
&P_0 = \frac{1}{T}\int_0^T\frac{1}{R_0^3}\int \phi_0(|\nabla\omega|^2+|\nabla j|^2)\,dx\,dt + \frac{1}{TR_0^3}\int\frac{1}{2}(|\omega(x,T)|^2+|j(x,T)|^2)\psi_0\,dx.
\end{align*}
Define the modified Kraichnan-type scale $\sigma_0$ by 
\begin{align}\label{kscale}
\sigma_0= \text{max}\{\Big(\frac{E_0}{P_0}\Big)^{1/2} , \Big(\frac{e_0}{P_0}\Big)^{1/4} \}.
\end{align}

Assumption 2 is that $\sigma_0<\beta R_0$, where $\beta$ is a constant ($0<\beta<1$) identified in the proof.  If $\omega$ and $j$ have large gradients as expected in a turbulent flow, this assumption will be satisfied.
\subsection{Localization}\label{localization}
Any smooth solution to the equations will have $\omega \in L^2((0,T)\times B(0,2R_0+R_0^{2/3}))$.  It is required that the kinetic and magnetic enstrophy is not too highly concentrated around any point:
\begin{align} \label{eq:local}
\int_0^T\int_{B(y,R)}|\omega|^2+|j|^2\,dx\,dt < \frac{1}{C_2}
\end{align}
for any $y\in B(0,2R_0)$ where $C_1$ is a constant and $R=2\sigma_0/\beta+(\sigma_0/\beta)^{2/3}$.

In \cite{BG}, it was required that $$\int_0^T\int_{B(0,2R_0+R_0^{2/3})}|\omega|^2\,dx\,dt < \frac{1}{C_3}.$$  This assumption restricts the extent of the range of scales over which we can show enstrophy concentration.  The magnetic enstrophy is in the present assumption because of the geometric/smoothness assumption on the current that is not found in \cite{BG}.

\subsection{Modulation}
The assumption imposes a restriction on the time evolution of the integral-scale kinetic and magnetic enstrophies across $(0,T)$ consistent with our choice of the temporal cutoff. Precisely,
\begin{align*}
&\int|\omega(x,T)|^2\psi_0(x)\,dx \geq \frac{1}{2} \sup_t \int |\omega(x,t)|^2\psi_0(x)\,dx,\\
&\int|j(x,T)|^2\psi_0(x)\,dx \geq \frac{1}{2} \sup_t \int |j(x,t)|^2\psi_0(x)\,dx,
\end{align*}

\section{Bounds}
In \cite{BG}, the terms of equations \ref{wflux} and \ref{jflux} are bounded by quantities which can be related to $e_0, E_0,$ and $P_0$.

Obtaining the desired bounds using Assumption \ref{geosmooth} will use all of the same techniques as in \cite{BG} with the major difference being labelling, except for the vortex stretching term which uses geometric depletion as in \cite{DG2}.

Ultimately, we have
\begin{align*}
H^\omega+H^j&+N_1^\omega + N_2^\omega + N_1^j + N_2^j+ L^\omega + L^j+X \\
\leq &\,K_P\Big(\frac{1}{\alpha}+||\omega||_{L^2( (0,T)\times B(x_i,2R+R^{2/3}))} \Big)\Big(\frac{1}{2}\sup_{t\in(0,T)} \int \psi(x)(|\omega(x,t)|^2+|j(x,t)|^2)\,dx\, + \\&\int_0^T\int\phi(|\nabla\omega|^2+| \nabla j|^2)\,dx\,dt \Big) + \frac{K_E}{R^2}\int_0^T\int \phi^{2\rho-1}(|\omega|^2+|j|^2)\,dx\,dt\,+\\& \frac{\alpha^2K_e}{R^4}\int_0^T\int \phi^{4\rho-3} \frac{|u|^2+|b|^2}{2}\,dx\,dt,
\end{align*}

where $K_P, K_E, K_e$ are constants and $\alpha$ is an interpolation parameter that we may choose.  The constants $K_P, K_E, K_e$ are fixed upon a choice of the parameters $K_1, K_2, \rho, C_0, C_1,$ and $M$.  In the following section $K_P,K_E, K_e$ are instead the constants obtained corresponding to $64K_1, 8K_2,$ and $C_0'$.

\section{Main result}

Kinetic and magnetic enstrophy is on average being transported to smaller scales down to the Kraichnan-type scale $\sigma_0/\beta$.  The following notation will be used in the proof: For a density $f$ and ensemble $\{\psi_i\}_{i=1}^n$, the average over one element of the ensemble is $F_i=\int f \psi_i\,dx$ and the ensemble average is $\langle F \rangle_R= \frac{1}{n}\frac{1}{TR^3}\sum_{i=1}^n F_i$.  Let $\varphi=-\int_0^T (u\cdot\nabla)\omega\cdot\omega + (u\cdot\nabla) j\cdot j\,dt$, the enstrophy flux density, so that $\Phi_i=\int \varphi \psi_i\,dx$ is the kinetic and magnetic enstrophy flux centered at $x_i$ at scale $R$. Referring to equations \ref{wflux} and \ref{jflux}, suppressing the subscript $i$, denote $H=H^\omega+H^j$ and likewise for $N$ and $L$. Define similarly $$e=\int_0^T\int \phi_i^{4\rho-3}\frac{1}{2}(|u|^2+|b|^2)\,dt\,dt ,$$ $$E= \int_0^T\int \phi_i^{2\rho-1} (|\omega|^2 +|j|^2)\,dx\,dt,$$ $$P=\int \psi_i(x)(|\omega(x,T)|^2+|j(x,T)|^2)\,dx + \int_0^T\int\phi_i(|\nabla\omega|^2+| \nabla j|^2)\,dx\,dt,$$ and  $$\tilde{P}=\frac{1}{2}\sup_{t\in(0,T)} \int \psi_i(x)(|\omega(x,t)|^2+|j(x,t)|^2)\,dx + \int_0^T\int\phi_i(|\nabla\omega|^2+| \nabla j|^2)\,dx\,dt.$$

\begin{thm*}
Under Assumptions 1-4, for any $K_1$ and $K_2$ there exists $K_*$ such that for any $(K_1,K_2)$-ensemble at scale $R$ ranging from $\sigma_0/\beta$ to $R_0$, we have $\displaystyle \frac{1}{K_*}P_0 \leq\langle \Phi \rangle_R \leq K_* P_0.$ 
\end{thm*}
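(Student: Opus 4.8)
The plan is to prove the two-sided bound first for ensembles at the Kraichnan scale $R=\sigma_0/\beta$ and then transfer it to every larger scale by refinement. Given any $(K_1,K_2)$-ensemble at a scale $R\in[\sigma_0/\beta,R_0]$, Lemma~\ref{lem2} produces a $(64K_1,8K_2,C_0')$-ensemble at scale $\sigma_0/\beta$ with the same ensemble average $\langle\Phi\rangle_R$, while $P_0$ is unchanged; hence it suffices to argue at $R=\sigma_0/\beta$, where the ball $B(x_i,2R+R^{2/3})$ appearing in the flux estimates is precisely the ball of the localization assumption \ref{eq:local}. Throughout I use the constants $K_P,K_E,K_e$ attached to $(64K_1,8K_2,C_0')$, as fixed in the Bounds section.

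Summing the flux identities \ref{wflux} and \ref{jflux} against each $\psi_i$ gives $\Phi_i=P_i^{\mathrm{flux}}+H_i+N_i+L_i+X_i$, where $P_i^{\mathrm{flux}}$ (carrying a factor $\tfrac12$ on the end-time term) agrees with the nonnegative quantity $P_i$ up to a factor $2$. Averaging over the ensemble and applying Lemma~\ref{lem1} to the nonnegative densities $|\nabla\omega|^2+|\nabla j|^2$ and $|\omega(\cdot,T)|^2+|j(\cdot,T)|^2$ yields $\tfrac{1}{128K_1}P_0\le\langle P^{\mathrm{flux}}\rangle_R\le 8K_2P_0$, while the weighted ($\psi^\delta$) form of Lemma~\ref{lem1} with $\delta=2\rho-1$ and $\delta=4\rho-3$ (both positive since $\rho>3/4$) gives $\langle E\rangle_R\le 8K_2E_0$ and $\langle e\rangle_R\le 8K_2e_0$. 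It therefore remains to show that $\langle H+N+L+X\rangle_R$ is a small multiple of $P_0$.

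For the error I insert the Bounds-section estimate and average it. The Kraichnan relations $E_0\le\sigma_0^2P_0$ and $e_0\le\sigma_0^4P_0$ coming from \ref{kscale}, together with $R=\sigma_0/\beta$, turn the last two terms into $8K_2\big(K_E\beta^2+\alpha^2K_e\beta^4\big)P_0$. The localization assumption \ref{eq:local} bounds $\|\omega\|_{L^2((0,T)\times B(x_i,2R+R^{2/3}))}<C_2^{-1/2}$. The remaining factor is $\langle\tilde P\rangle_R$, which differs from $\langle P^{\mathrm{flux}}\rangle_R$ only by replacing the end-time value $\int\psi_i(|\omega(T)|^2+|j(T)|^2)$ with its temporal supremum; the modulation assumption is what forces these to be comparable, giving $\langle\tilde P\rangle_R\le c_0\langle P^{\mathrm{flux}}\rangle_R$ for a constant $c_0=c_0(K_1,K_2)$. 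I expect this supremum control to be the main obstacle, since modulation is stated only at the integral scale and must be propagated to each $\psi_i$: the naive estimate $\psi_i\le\psi_0$ loses a factor $(R_0/R)^3$ because one cannot exchange $\sum_i\sup_t$ with $\sup_t\sum_i$, so one must instead argue that each member's own end-time enstrophy dominates its own supremum.

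Finally I fix the parameters in the order $\alpha,C_2,\beta$. Choosing $\alpha$ and $C_2$ large makes $K_Pc_0(\alpha^{-1}+C_2^{-1/2})\le\tfrac12$, so the $\langle\tilde P\rangle_R$ contribution is absorbed into half of $\langle P^{\mathrm{flux}}\rangle_R$; then choosing $\beta$ small \emph{after} $\alpha$ is fixed (so $\alpha^2\beta^4$ stays controlled) makes $8K_2(K_E\beta^2+\alpha^2K_e\beta^4)\le\tfrac14\cdot\tfrac{1}{128K_1}$. Combining these with $\langle P^{\mathrm{flux}}\rangle_R\ge\tfrac{1}{128K_1}P_0$ gives the lower bound $\langle\Phi\rangle_R\ge\tfrac{1}{512K_1}P_0$, and the matching upper bound follows from $\langle P^{\mathrm{flux}}\rangle_R\le 8K_2P_0$ together with the same error estimate; taking $K_*$ to be the larger of the two resulting constants completes the proof.
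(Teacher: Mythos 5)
Your architecture coincides with the paper's proof almost step for step: prove the two-sided bound at $R=\sigma_0/\beta$ for $(64K_1,8K_2,C_0')$-ensembles and transfer through the whole range $[\sigma_0/\beta,R_0]$ via Lemma \ref{lem2}; split $\langle\Phi\rangle_R$ into the nonnegative palinstrophy average, two-sided by Lemma \ref{lem1}, plus $\langle H+N+L+X\rangle_R$; average the Bounds-section estimate using the weighted ($\psi^\delta$) form of Lemma \ref{lem1} for $E$ and $e$; convert $E_0/R^2$ and $e_0/R^4$ into $\beta^2 P_0$ and $\beta^4 P_0$ via the definition \ref{kscale} of $\sigma_0$; use localization \ref{eq:local} for the $\|\omega\|_{L^2}$ factor; and fix $\alpha$ first, shrinking $\beta$ only afterwards so that $\alpha^2\beta^4$ stays small. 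That is exactly the paper's argument, including the order of the parameter choices (the paper takes $\alpha=4\cdot 64K_1\cdot 8K_2K_P$, then $\beta$ with $8K_2K_E\beta^2+\alpha^2\cdot 8K_2K_e\beta^4\le\frac{1}{4\cdot 64K_1}$); the factor-$2$ discrepancies ($\frac{1}{128K_1}$ versus $\frac{1}{64K_1}$, $C_2^{-1/2}$ versus $C_2^{-1}$) are harmless bookkeeping.

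The one genuine divergence is the step you flagged yourself, and it is a real gap in your write-up: the inequality $\langle\tilde P\rangle_R\le c_0\langle P^{\mathrm{flux}}\rangle_R$ with $c_0=c_0(K_1,K_2)$ does not follow from Assumptions 1--4. Modulation is assumed only against $\psi_0$, the estimate $\psi_i\le\psi_0$ loses $(R_0/R)^3$ exactly as you say, and your proposed repair --- that each member's own end-time enstrophy dominates its own temporal supremum --- is a per-member modulation hypothesis that is not assumed and is false in general (enstrophy can leave $\operatorname{supp}\psi_i$ before time $T$, making $\int\psi_i|\omega(x,T)|^2\,dx$ tiny while $\sup_t\int\psi_i|\omega(x,t)|^2\,dx$ stays large, with nothing forcing the local gradient term to compensate). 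The paper never needs this scale-$R$ comparability: since the target bound is against $P_0$, it estimates $\langle\tilde P\rangle_R\le 8K_2\tilde{P_0}$ and then invokes modulation precisely where it is stated --- at the integral scale --- to replace $\tilde{P_0}$ by a constant multiple of $P_0$, so that the entire error is $\le\frac{3}{4\cdot 64K_1}P_0$ and is absorbed against $\langle P\rangle_R\ge\frac{1}{64K_1}P_0$. Your relative absorption into half of $\langle P^{\mathrm{flux}}\rangle_R$ is therefore both stronger than needed and unproven; the structural fix is to dominate $\langle\tilde P\rangle_R$ by a multiple of $P_0$, not of $\langle P^{\mathrm{flux}}\rangle_R$. (To be fair, your sup-versus-sum concern does resurface in the paper's own step: Lemma \ref{lem1} applies verbatim only to the gradient part of $\tilde P$, while averaging the $\sup_t$ part still involves exchanging a supremum with the ensemble sum, a point the paper passes over silently --- so you identified a genuine subtlety, but drew the wrong structural conclusion from it.)
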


\begin{proof}

We start by showing that for $R=\sigma_0/\beta$, for $(64K_1,8K_2)$-ensembles satisfying $C_0'$ bounds, we have $\displaystyle \frac{1}{K_*}P_0 \leq\langle \Phi \rangle_R \leq K_* P_0.$

Referring to equations \ref{wflux} and \ref{jflux}, $\langle \Phi \rangle_R = \langle P \rangle_R + \langle H + N + L + X \rangle _R$, where $p$ is the positive density $\frac{1}{2}|\omega(T)|^2 + \int_0^T |\nabla\omega|^2\eta\,dt$ so that $\frac{1}{64K_1}P_0\leq \langle P\rangle_R \leq 8K_2 P_0$.  The rest of the terms are relatively small upon averaging: $\displaystyle |H+N+L+X|\leq K_P\Big(\frac{1}{\alpha}+||\omega||_{L^2(B(x_i,2R+R^{2/3})\times(0,T))} \Big)\tilde{P} + \frac{K_E}{R^2}E+ \frac{\alpha^2K_e}{R^4}e$, so that
\begin{align*}
|\langle H + N + L + X \rangle _R| &\leq \Big\langle K_P\big(\frac{1}{\alpha}+||\omega|| \big)\tilde{P} + \frac{K_E}{R^2}E+ \frac{\alpha^2K_e}{R^4}e \Big\rangle_R
\\& \leq 8K_2K_P\Big(\frac{1}{\alpha}+||\omega||\Big)\tilde{P_0}+\frac{8K_2K_E}{R^2}E_0+\frac{\alpha^2 8K_2K_e}{R^4}e_0
\\ &= \Big(8K_2K_P\big(\frac{1}{\alpha}+||\omega||\big) +8K_2K_E\beta^2 + \alpha^2 8K_2 K_e \beta^4 \Big)P_0
\\ & \leq \frac{3}{4\cdot 64K_1} P_0,
\end{align*}
by taking $\alpha= 4\cdot 64K_1\cdot 8K_2K_P$, using that $\beta$ is sufficiently small such that $8K_2K_E\beta^2+\alpha^2\cdot 8K_2K_e\beta^4 \leq \frac{1}{4\cdot 64K_1}$, and using the assumption that $||\omega||_{L^2(B(x_i,2R+R^{2/3})\times(0,T))}\leq\frac{1}{C_2}= \frac{1}{4\cdot64K_1\cdot 8K_2K_P}$.

Therefore $\displaystyle \frac{1}{4\cdot 64K_1}P_0 \leq \langle \Phi\rangle_R \leq (8K_2+ \frac{3}{4\cdot 64K_1} )P_0$ for any $(64K_1,8K_2, C_0')$-ensemble at scale $R=\sigma_0/\beta$.  For the range of scales from $\sigma_0/\beta$ up to $R_0$, by Lemma \ref{lem2},  $\displaystyle \frac{1}{4\cdot 64K_1}P_0 \leq\langle \Phi \rangle_R \leq (8K_2 +\frac{3}{4\cdot 64K_1}) P_0$ for all scale $R$ $(K_1,K_2, C_0)$-ensembles.

\end{proof}

The locality of magnetic and kinetic enstrophy flux stated in \cite{BG} also holds in our setting of more satisfying assumptions and modified ensemble averages.  The result is stated in terms of the time-averaged enstrophy flux, rather than the time-averaged enstrophy flux per unit mass used above.  That is, $$\langle \Psi \rangle_R = \frac{1}{n} \sum_{i=1}^n \frac{1}{T}\int_0^T \int \frac{1}{2}(|\omega|^2+|j|^2)(u\cdot\nabla\phi_i)\,dx\,dt$$ for an ensemble $\{\phi_i\}_{i=1}^n$ at scale $R$.
\begin{cor*}
Under Assumptions 1-4, for any $K_1, K_2$ there exists $K_*$ such that for any $r, R$ between $\sigma_0/\beta$ and $R_0$ and any $(K_1,K_2)$ ensembles, enstrophy flux is local:
\begin{align*}
\frac{1}{K_*^2} \Big(\frac{r}{R}\Big)^3 \leq \frac{\langle \Psi\rangle_r}{\langle \Psi\rangle_R} \leq K_*^2 \Big(\frac{r}{R}\Big)^3.
\end{align*}
\end{cor*}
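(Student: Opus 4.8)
The plan is to deduce the corollary directly from the Theorem, observing that the enstrophy flux $\langle\Psi\rangle_R$ and the per-unit-mass flux $\langle\Phi\rangle_R$ of the Theorem differ only by the normalization $R^3$. Once this is made precise, the locality estimate is just the ratio of the two-sided bound of the Theorem applied at the two scales $r$ and $R$, with the scale-independent quantity $P_0$ cancelling.

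First I would unwind the definitions. By the flux identity \ref{flux}, i.e.\ spatial integration by parts together with $\nabla\cdot u=0$, the time-weighted flux carried by $\phi_i=\psi_i\eta$ satisfies
\[
\int_0^T\!\!\int \tfrac12(|\omega|^2+|j|^2)(u\cdot\nabla\phi_i)\,dx\,dt = -\int_0^T\!\!\int\big[(u\cdot\nabla)\omega\cdot\omega+(u\cdot\nabla)j\cdot j\big]\phi_i\,dx\,dt,
\]
which is exactly $\Phi_i$. Hence $\Psi_i=\Phi_i$ for each element of the ensemble, and comparing the normalizations in the two averages gives $\langle\Psi\rangle_R=\frac1n\frac1T\sum_i\Phi_i=R^3\langle\Phi\rangle_R$, and likewise $\langle\Psi\rangle_r=r^3\langle\Phi\rangle_r$.

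Next I would invoke the Theorem. Given $K_1,K_2$, let $K_*$ be the constant it furnishes; this constant is uniform over all scales in $[\sigma_0/\beta,R_0]$ and over all $(K_1,K_2)$-ensembles at each such scale. Applying $\tfrac1{K_*}P_0\le\langle\Phi\rangle_s\le K_*P_0$ at $s=r$ and $s=R$ and multiplying by $s^3$ gives
\[
\frac{s^3}{K_*}P_0 \le \langle\Psi\rangle_s \le K_* s^3 P_0,\qquad s\in\{r,R\}.
\]
Since $P_0$ is fixed at the integral scale $R_0$ and does not depend on $s$, dividing the estimate for $\langle\Psi\rangle_r$ by that for $\langle\Psi\rangle_R$ (smallest numerator over largest denominator for the lower bound, and conversely for the upper bound) yields $\tfrac1{K_*^2}(r/R)^3\le\langle\Psi\rangle_r/\langle\Psi\rangle_R\le K_*^2(r/R)^3$. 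Because the Theorem's bounds hold for an arbitrary ensemble at each scale, the two ensembles at scales $r$ and $R$ may be chosen independently.

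I expect no genuine analytic obstacle: all the work has already been absorbed into the Theorem, and the corollary is a change of normalization followed by dividing two inequalities. The only points demanding care are bookkeeping ones---verifying that $\Phi_i$ and $\Psi_i$ genuinely coincide, so that the sole difference between the averages is the factor $R^3$, and confirming that $P_0$ is scale-independent so that it drops out. The one subtlety worth double-checking is that the temporal cutoff $\eta$ is carried consistently through the flux identity, so that the same time weight appears in both $\Phi_i$ and $\Psi_i$.
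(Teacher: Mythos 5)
Your proposal is correct and coincides with the argument the paper intends (it states the corollary without writing out a proof, as an immediate consequence of the Theorem): since $\langle\Psi\rangle_s = s^3\langle\Phi\rangle_s$ by the flux identity and the differing normalizations, applying the two-sided bound $\frac{1}{K_*}P_0\leq\langle\Phi\rangle_s\leq K_*P_0$ independently at $s=r$ and $s=R$ and dividing gives exactly the stated locality estimate. Your bookkeeping checks---that $\Phi_i$ and $\Psi_i$ agree including the temporal cutoff $\eta$, and that $P_0$ is scale-independent and positive so it cancels---are precisely the points that make the deduction go through.
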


\subsubsection*{Acknowledgments}
I would like to thank my advisor, Professor Gruji\'c, for his suggestions and guidance.

\bibliographystyle{plain}
\bibliography{bib}

\end{document}